\newtheorem{theorem}{Theorem}[section]
\newtheorem{lemma}[theorem]{Lemma}
\theoremstyle{definition}
\newtheorem{definition}[theorem]{Definition}
\theoremstyle{remark}
\newtheorem{remark}[theorem]{Remark}
\numberwithin{equation}{section}
\theoremstyle{plain}
\def\C{\mathbb C}
\def\I{\rm I}
\def\F{\mathbb F}
\def\H{\mathbb H}
\def\V{\mathbb V}
\def\W{\mathbb W}
\def\E{\mathbb E}
\def\F{\mathbb F}
\def\H{\mathbb H}
\def\R{\mathbb R}
\def\V{\mathbb V}
\def\W{\mathbb W}
\def\K{\mathbb K}
\newcommand{\thmref}[1]{Theorem~\ref{#1}}
\newcommand{\lemref}[1]{Lemma~\ref{#1}}
\newcommand{\remref}[1]{Remark~\ref{#1}}
\newcommand{\eqnref}[1]{~{\textrm(\ref{#1})}}
\newcommand{\defref}[1]{Definition~\ref{#1}}
\begin{document}
 
\title{The $z$-Classes of Isometries}
\author{Krishnendu Gongopadhyay \and Ravi S. Kulkarni}
\address{Department of Mathematical Sciences, Indian Institute of Science Education and Research (IISER) Mohali, Knowledge city, Sector 81, S.A.S. Nagar, P.O. Manauli 140306, India}
\thanks{Gongopadhyay acknowledges the support of SERC-DST FAST  grant {SR/FTP/MS-004/2010}. }
\email{krishnendu@iisermohali.ac.in,  krishnendug@gmail.com}
\address{Bhaskaracharya Pratishthana,  56/14, Erandavane, Damle Path, 
Off Law College Road, Pune  411 004, India}
\email{punekulk@gmail.com}
\date{\today}
\subjclass[2000]{Primary 20E45; Secondary 20G15, 15A63,  }
\keywords{conjugacy classes, centralizers, $z$-classes, orthogonal and symplectic groups}
\begin{abstract}
Let $G$ be a group. Two elements $x,y$ are said to be in the same \emph{z-class} if their centralizers are conjugate in $G$.  Let $\V$ be a vector space of dimension $n$ over a field $\F$ of characteristic different from $2$. Let $B$ be a non-degenerate symmetric, or skew-symmetric, bilinear form on $\V$. Let ${\rm I}(\V, B)$ denote the group of isometries of $(\V, B)$. We show that the number of $z$-classes in ${\rm I}(\V, B)$ is finite when $\F$ is perfect and has the property that it has only finitely many field extensions of degree at most $n$.
\end{abstract}
\maketitle
\section{Introduction}\label{intro}

Let $G$ be a group. We define an equivalence relation $\sim$ on $G$ as follows:  for $x$, $y$ in $G$, $x \sim y$ if the centralizers $Z_G(x)$ and $Z_G(y)$ are conjugate in $G$.  The equivalence class of $x$ is called the \emph{$z$-class} of $x$ in $G$. The $z$-classes are pairwise disjoint and give a partition of the group $G$. This provides important information about the internal structure of the group, see \cite{rkrjm} for further details. The structure of each $z$-class can be expressed as a certain set theoretic fibration, see \cite[Theorem 2.1 ]{rkrjm}. In general, a group may be infinite and it may have infinitely many conjugacy classes, but the number of $z$-classes is often finite. For example, if $G$ is a compact Lie group, then it is implicit in Weyl's structure theory see, \cite{weyl}, Borel-de Siebenthal \cite{bds}, that the number of $z$-classes in $G$ is finite. Analogously, Steinberg  \cite[p.107]{stein} has remarked on the finiteness of $z$-classes in  reductive algebraic groups over an algebraically closed field of \emph{good} characteristic. In \cite{rkrjm}, Kulkarni proposed to interpret the $z$-classes as an internal ingredient in a group $G$ that can be used to make precise the intuitive notion of  ``dynamical types" in the \hbox{$G$-action} on any set $X$. The Fibration Theorem, see \cite[Theorem 2.1]{rkrjm},  gives a set-theoretic fibration of the $z$-class of $x$ with base the homogeneous space $G/N(x)$, where $N(x)$ is the normalizer of $Z_G(x)$ in $G$, and a fiber consists of the elements $y$ in the center of $Z_G(x)$ such that $Z_G(x)=Z_G(y)$. 
For example, in classical  geometries over $\R$, $\C$ or $\H$, it is observed that the ``dynamical types" that our mind can perceive are just finite in number and  this finiteness of ``dynamical types'' can be interpreted as a phenomenon related to the finiteness of the $z$-classes in the corresponding group of the geometry.  With this motivation, the $z$-classes in the isometry group of the $n$-dimensional real hyperbolic space were classified and counted  in \cite{gk1}. It is also an interesting problem to classify the $z$-classes in other linear groups that appear as isometry group in rank one symmetric spaces of non-compact type. The $z$-classes in the isometry group ${\rm Sp}(n,1)$ of the $n$-dimensional quternionic hyperbolic space have been classified and counted in \cite{gjg}.   Classification of the $z$-classes in ${\rm U}(n,1)$, the isometry group of the $n$-dimensional complex hyperbolic space,  has been obtained in \cite{cg2}, also see \cite[Appendix]{gjg}. Recently, $z$-classes have also been used in the context of classifying the isometries in hyperbolic geometries, see,  \cite{cg, go1, go2}. 

 In addition to these, it is of independent algebraic interest to parametrize both the conjugacy and the $z$-classes in a group.  For example, the problem can be asked for finite groups of Lie type; classical groups or exceptional groups.  The conjugacy classes, $z$-classes and the set of operators themselves of the general linear groups and the affine groups have been parametrized by Kulkarni \cite{kulkarni}. This has been extended to linear operators over division rings by  Gouraige \cite{rony}. In an attempt to understand the $z$-classes in exceptional groups,  Singh \cite{as} has proved a finiteness result for the $z$-classes in the compact real form $G_2$.

\medskip  Let $\F$ be a field of characteristic different from 2. Let $\V$ be a vector space of dimension $n$ over $\F$.  Let   $\V$ be equipped with a non-degenerate symmetric or  skew-symmetric bilinear form $B$.  The group of isometries of $(\V, B)$ is denoted by ${\rm I}(\V, B; \F)$, or simply ${\rm I}(\V, B)$ when the underlying field is fixed. When $B$ is symmetric, resp. skew-symmetric, ${\rm I}(\V, B)$ is the orthogonal, resp. symplectic group. 
 In this paper we ask for the $z$-classes in ${\rm I}(\V, B)$. Our main theorem is the following. 

\begin{theorem}\label{zc}
If $\F$ is perfect and has the property that it has only finitely many field extensions of degree at most $\dim \V$, then the number of $z$-classes in ${\rm I}(\V, B)$ is finite. This holds for example when the field $\F$ is algebraically closed, the field of real numbers, or a local field.  
\end{theorem}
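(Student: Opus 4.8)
The plan is to assemble the statement from the structural classification of $z$-classes already in hand, reducing the count to three independent finiteness facts. First I would exploit the Jordan decomposition available over a perfect field: each $T$ in $I(\V,B)$ factors as $T=T_sT_u$ with $T_s$ semisimple, $T_u$ unipotent, both isometries, and $Z(T)=Z(T_s)\cap Z(T_u)$. The primary decomposition of \secref{primary} then splits $\V$ orthogonally into $T$-invariant components on which $m_T$ has one of the three prime-power types, together with the canonical product decomposition of $Z(T)$ over these components. Consequently the $z$-class of $T$ is pinned down by the aggregate, over the primary components, of the data of \thmref{zppp}, \thmref{zcs} and \thmref{zdp}. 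It therefore suffices to prove that, as $T$ ranges over all of $I(\V,B)$, each kind of datum takes only finitely many values and the number of components is bounded.

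Next I would dispose of the discrete data. The combinatorial part --- the number of primary components, the degrees $m$ of the irreducible divisors, the exponents $d_i$ of the primes in the elementary divisors, and their multiplicities $l_i$ --- is constrained by relations of the form $n=m\Sigma_i d_i l_i$ and the analogues in \thmref{zppp} and \thmref{zdp}; since $n$ has only finitely many partitions, this part ranges over a finite set, and in particular the number of components is at most $n$. The algebra data are the isomorphism classes of the cyclic algebras $\E_{d_i}\cong\F[x]/(p(x)^{d_i})$; each is determined by the exponent $d_i\le n$ together with the isomorphism type of the residue field $\F[x]/(p(x))$, an extension of $\F$ of degree $\le n$. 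By hypothesis only finitely many such extensions exist, so only finitely many algebras occur.

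The main step, and the real obstacle, is to bound the form data: the equivalence classes of the hermitian (resp.\ $(-1)^{d_i-1}$-hermitian) forms $H_{d_i}$ of rank $\le n$ over the finitely many algebras above. I would first observe that the hypothesis forces the square-class group $\F^{\ast}/(\F^{\ast})^2$ to be finite, since every nontrivial square class $a$ yields a distinct quadratic extension $\F(\sqrt a)$, one of the finitely many extensions of degree $\le n$. Diagonalizing a non-degenerate quadratic form as $\langle a_1,\dots,a_r\rangle$ shows its class depends only on the square classes of the $a_i$, so there are finitely many classes of rank $\le n$ over $\F$, and the same holds over each extension field arising above, whose square-class groups are again finite because their quadratic extensions remain among the bounded-degree extensions of $\F$. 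To pass to hermitian forms I would use that $2$ is a unit in every $\E_{d_i}$, so these forms diagonalize and their classes are controlled by finite invariant groups --- the residue-field square classes, or the norm group $N_{\E/\E_0}(\E^{\ast})$ of the involution-fixed subfield $\E_0$, each a quotient of a finite square-class group. The delicate points are verifying that this reduction respects the involution $e\mapsto\bar e$, and that it remains valid for the non-field local algebras $\F[x]/(p(x)^{d_i})$ with $d_i>1$, where one must reduce modulo the nilpotent maximal ideal and use that $1+\mathfrak m$ consists of squares.

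Finally I would conclude by assembling: each of the at most $n$ primary components contributes data drawn from a finite set, so the full tuple of data --- and hence, by \thmref{zppp}, \thmref{zcs} and \thmref{zdp}, the $z$-class of $T$ --- takes only finitely many values. The listed examples satisfy the hypothesis: an algebraically closed field carries a unique form in each rank; $\R$ has finitely many extensions and forms classified by signature; and a local field has finitely many extensions of each degree and a finite square-class group by local class field theory.
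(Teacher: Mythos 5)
Your proposal is correct, and it reaches the theorem by a route that is meaningfully more direct (and more complete) than the paper's own proof. The paper's argument is a two-line reduction: over a perfect field it invokes the Jordan decomposition and then counts only the data of the \emph{semisimple} classification (\thmref{zcs}) --- finitely many extensions of degree $\leq n$ give finitely many quadratic forms of rank $\leq n$, hence finitely many hermitian forms over the extension \emph{fields} that occur, and there are finitely many partitions of $n$; the unipotent contribution is relegated to the proposition that follows, and the step relating the $z$-class of a general element to this semisimple-plus-unipotent data is left implicit. You instead bypass the Jordan decomposition and bound, over the primary decomposition, the data of the general classification theorems \thmref{zppp}, \thmref{zdp} and \thmref{zcs}. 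The price is that you must count hermitian forms over the non-reduced cyclic algebras $\F[x]/(p(x)^{d})$ with $d>1$, which the paper never needs; your reduction modulo the nilpotent maximal ideal $\mathfrak{m}$, using that $1+\mathfrak{m}$ consists of squares since $2$ is invertible, is exactly the right tool for that. What each approach buys: the paper stays entirely within form theory over fields, at the cost of using perfectness essentially and leaving the non-semisimple bookkeeping to the reader; your argument treats all elements uniformly, and the counting itself uses perfectness only to identify $\F[x]/(p(x)^{d})$ with $K[y]/(y^{d})$, $K$ the residue field (separability of the residue extension). One imprecision you should repair: the claim that the square-class group of \emph{each} extension field occurring is finite ``because its quadratic extensions remain among the bounded-degree extensions of $\F$'' does not hold for the field $\E$ itself, since a quadratic extension of $\E$ has degree $2[\E:\F]$ over $\F$, which can exceed $n$; but the invariant you actually need, the norm quotient $\E_0^{\ast}/N_{\E/\E_0}(\E^{\ast})$ of the involution-fixed subfield $\E_0$, is a quotient of the square-class group of $\E_0$, and quadratic extensions of $\E_0$ have degree $2[\E_0:\F]=[\E:\F]\leq n$ over $\F$, so that group is finite under the hypothesis and your count goes through.
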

Along the way we parametrize the $z$-classes of the semisimple elements, see,  \hbox{\thmref{zcs}}. 

A first step in understanding of the $z$-classes is to classify the conjugacy classes. There has been a considerable amount of work on the conjugacy problem in orthogonal and symplectic groups, see,  Asai \cite{asai}, \hbox{Burgoyne} and Cushman \cite{bc}, Kiehm \cite{k}, Milnor \cite{milnor},  Springer-Steinberg \cite{ss}, Wall \cite{wall} and Williamson \cite{will}. A common theme of these works is to reduce the conjugacy problem to the equivalence problem for Hermitian forms. Our conjugacy classification has a similar flavor. However,  a notable \hbox{feature} of our classification is that,  in the ``generic" case when the minimal polynomial of an element in ${\rm I}(\V, B)$  is a prime-power,   it gives an explicit parametrization  of the conjugacy classes, see, \thmref{ppp}. Consequently, we also obtain a parametrization of the $z$-classes in this case, see, \thmref{zc1}.  As we shall see,  the $z$-classification  depends on  the equivalence problem of Hermitian forms over arbitrary fields.  The equivalence problem of   Hermitian forms was solved by Kiehm \cite{k} and Wall \cite{wall}. We will not get into the equivalence problem of the Hermitian spaces in this paper. However, our classification of the $z$-classes is enough to prove our main result, \thmref{zc}.

\section{Preliminaries}\label{prel}
\subsection{Self-dual polynomial}\label{reciprocal}
  Let $\F[x]$ be the ring of polynomials over $\F$. For a polynomial $g(x)$ let $c_k(g)$ denote the coefficient of $x^k$ in $g(x)$. Let $\bar \F$ denote the algebraic closure of $\F$. 
 
Let $f(x)$ be a monic polynomial of degree $n$ over $\F$ such that $0, 1$ and $-1$ are not its roots. Over $\bar \F$ let
$$f(x)=(x-c_1)(x-c_2)....(x-c_n).$$
Then the polynomial 
$$f^{\ast}(x)=(x-c_1^{-1})(x-c_2^{-1})....(x-c_n^{-1})$$
is said to be the \emph{dual to $f(x)$}. It is easy to see that
$$f^{\ast}(x)=f(0)^{-1} x^n f(x^{-1}).$$  
Clearly, $c_k(f^{\ast})=f(0)^{-1}c_{n-k}$. 

\begin{definition}
Let $f(x)$ be a monic polynomial over $\F$ such that $-1$, $0$, $1$ are not its roots. The polynomial $f(x)$ is called \emph{reciprocal}, or \emph{self-dual}, if $f(x)=f^{\ast}(x)$. 
\end{definition}
Thus if $f(x)$ self-dual, then the degree $n$ of $f(x)$ is even, and for all $k$, $c_k(f)=c_{n-k}(f)$.

\subsection{Decomposition of the space relative to an isometry}\label{primary}

 Suppose { $T: \V \to \V$}  is an element in ${\rm I}(\V, B)$. Let $m_T(x)$ denote the minimal polynomial of $T$. Suppose $p_1(x), \ldots ,p_l(x)$ are irreducible polynomials over $\F$ such that $m_T(x)= p_1(x)^{d_1}\ldots p_l(x)^{d_l}$, where for $i \neq j$, $p_i(x) \neq p_j(x)$. Suppose degree of $m_T(x)$ is $m$. The integer $d_i$ is called the \emph{exponent}, or the \emph{multiplicity}, of the prime factor $p_i(x)$.

Let $\E=\F[x]/(m_T(x))$. The image of the indeterminate $x$ in $\E$ is denoted by $t$. There is a canonical algebra structure on $\E$ defined by $tv=Tv$.  The $\F$-algebra $\E=\F[t]$ is  spanned by $\{1, t, t^2,\cdots,t^{m-1}\}$. In particular, if the minimal polynomial is irreducible, then $\E$ is an extension field of $\F$. The following lemma follows from Lemma 4.1 in \cite{gk2}.

\begin{lemma}\label{aut}
(i) The minimal polynomial of an element $T$ in ${\I}(\V, B)$  is self-dual.

(ii)There is a unique automorphism $e \to \bar e$ of $\E$ over $\F$ which carries $t$ to $t^{-1}$.
\end{lemma}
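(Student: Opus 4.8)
The plan is to realize the self-duality of $m_T$ through the $B$-adjoint, and then read off the automorphism of $\E$ directly from the self-dual relation.

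First I would introduce the adjoint. Since $B$ is non-degenerate, every linear map $T \colon \V \to \V$ has a unique $B$-adjoint $\hat T$, characterized by $B(Tu, v) = B(u, \hat T v)$ for all $u, v \in \V$. The assignment $T \mapsto \hat T$ is an $\F$-linear anti-automorphism of $\mathrm{End}(\V)$ fixing scalars, so $\widehat{p(T)} = p(\hat T)$ for every $p \in \F[x]$; in particular $T$ and $\hat T$ have the same minimal polynomial. The isometry condition $B(Tu, Tv) = B(u, v)$ reads $B(u, \hat T T v) = B(u, v)$ for all $u, v$, so $\hat T T = \mathrm{id}$ and hence $\hat T = T^{-1}$. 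Therefore $m_T = m_{T^{-1}}$.

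Next I would identify $m_{T^{-1}}$ with $m_T^{\ast}$. As $T$ is invertible, $0$ is not a root of $m_T$, so $m_T(0) \neq 0$ and the formula $m_T^{\ast}(x) = m_T(0)^{-1} x^d m_T(x^{-1})$, with $d = \deg m_T$, defines a monic polynomial of degree $d$. Evaluating gives $m_T^{\ast}(T^{-1}) = m_T(0)^{-1} T^{-d} m_T(T) = 0$, so $m_{T^{-1}}$ divides $m_T^{\ast}$; the symmetric computation with $T^{-1}$ in place of $T$ forces $\deg m_{T^{-1}} = \deg m_T = \deg m_T^{\ast}$, and as both are monic we conclude $m_{T^{-1}} = m_T^{\ast}$. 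Combined with $m_T = m_{T^{-1}}$ from the previous step, this yields $m_T = m_T^{\ast}$, proving (i). (The identity $m_T = m_T^{\ast}$ uses only that $0$ is not a root; the eigenvalues $\pm 1$, being their own inverses, cause no trouble, even though the definition of self-dual nominally sets them aside.)

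For (ii), I would exploit that $\E = \F[t]$ with $m_T(t) = 0$. Writing $m_T(x) = \sum_{k=0}^{d} a_k x^k$ with $a_d = 1$, the relation $m_T(t) = 0$ gives $t^{-1} = -a_0^{-1}(t^{d-1} + a_{d-1} t^{d-2} + \cdots + a_1) \in \E$, so $t$ is a unit. By the universal property of $\F[x]/(m_T(x))$, sending $t \mapsto t^{-1}$ extends to an $\F$-algebra endomorphism of $\E$ precisely when $m_T(t^{-1}) = 0$; and indeed $m_T(t^{-1}) = m_T^{\ast}(t^{-1}) = m_T(0)^{-1} t^{-d} m_T(t) = 0$ by (i). This map carries $t$ to $t^{-1}$ and $t^{-1}$ back to $t$, so it squares to the identity on the generator $t$ and hence on all of $\E$; it is thus an involutive automorphism. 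Uniqueness is immediate, since $t$ generates $\E$ over $\F$ and an $\F$-algebra map out of $\E$ is determined by the image of $t$. The main obstacle is the middle step, the clean identification $m_{T^{-1}} = m_T^{\ast}$: one must track degrees and leading coefficients to see that $m_T^{\ast}$ is again monic of degree $d$, and must reconcile the identity with a definition of self-dual that excludes $\pm 1$ as roots even though an isometry (a reflection, or the identity) may have them. I expect this bookkeeping, rather than any conceptual difficulty, to be where care is needed; once $m_T = m_T^{\ast}$ is in hand, (ii) follows formally.
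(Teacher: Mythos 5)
Your proof is correct and follows essentially the same route as the paper's: the paper's key identity $B(f(T)u,v)=B(u,f(T^{-1})v)$ is exactly your adjoint formalism $\hat T=T^{-1}$, both arguments deduce $m_T(T^{-1})=0$ and hence self-duality, and both obtain the automorphism in (ii) from the presentation $\E=\F[x]/(m_T(x))$ together with $m_T(t^{-1})=0$. The only difference is that the paper leaves the divisibility and degree bookkeeping behind $m_{T^{-1}}=m_T^{\ast}$ implicit, which you spell out.
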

Thus an irreducible factor $p(x)$ of the minimal polynomial can be one of the following three types:
 
 (i) $p(x)$ is self-dual.
 
 (ii) $p(x)=x-1$, or, $x+1$.
 
 (iii) $p(x)$ is not self-dual. In this case there is an irreducible factor $p^{\ast}(x)$ of the minimal polynomial such that $p^{\ast}(x)$ is dual to $p(x)$.

Among the irreducible factors of $m_T(x)$, suppose $p_i(x)$ is self-dual for $i=1,2,...,k_1$. Let the other irreducible factors be $p_j(x), p_j^{\ast}(x)$ for $j=1,2,...,k_2$ with $p_j(x) \neq p_j^{\ast}(x)$. For a prime-power  polynomial $p(x)^d$,  let $\V_p=\;ker\; p(T)^d$. Let $\oplus$ denote the orthogonal sum, and $+$ denote the usual sum of subspaces. It can be seen easily that there is a \emph{primary decomposition} of $\V$ (with respect to $T$) into $T$-invariant non-degenerate subspaces:
\begin{equation}\label{decom}
\V=\oplus_{i=1}^{k_1} \V_i \bigoplus \oplus_{j=1}^{k_2} \V_j
\end{equation}
where for $i=1,2,...,k_1$, $p_i(x)$ is self-dual, $\V_i=\V_{p_i}$, and $B|_{\V_i}$ is non-degenerate; for $j=1,2,...,k_2$, $\V_j=\V_{p_j} + \V_{p^{\ast}_j}$, $B|_{\V_{p_j}}=0=B|_{\V_{p_j^{\ast}}}$, here $p_j(x) \neq p_j^{\ast}(x)$.  Let  $T_l$ denote the restriction of $T$ to $\V_l$. Then $m_{T_i}(x)=p_i(x)^{d_i}$ for $i=1,2,...,k_1$, and $m_{T_j}(x)=p_j(x)^{d_j}p_j^{\ast}(x)^{d_j}$ for $j=1,2,...,k_2$.  Let $Z(T)$ denote the centralizer of $T$ in ${\I}(\V, B)$. We observe that the decomposition \eqnref{decom} is in fact invariant under $Z(T)$. Moreover we have a canonical decomposition
$$Z(T)=\Pi_{i=1}^{k_1}Z(T_i) \times \Pi_{j=1}^{k_2}Z(T_j).$$
Thus the conjugacy classes and the $z$-classes of $T$ are determined by the restriction of $T$ to each of the primary subspaces. Hence it is enough to determine the conjugacy and the $z$-classes of an isometry $T:\V \to \V$ with minimal polynomial $m_T(x)=p(x)^d$, where $p(x)$ is one of the types (i), (ii), (iii) above.

Finally note the following lemma. For a proof of the lemma, see, \cite[Lemma 4.2]{gk2}.
\begin{lemma}\label{pd2}
Let $T$ be an element in ${\I}(\V, B)$. Suppose $T:\V \to \V$ is such that the minimal polynomial  is one of the types (i), (ii) above. Suppose $m_T(x)=p(x)^d$.   There is an orthogonal decomposition  $\V=\oplus_{i=1}^k \V_{d_i}$,
 where $1 \leq d_1 <\cdots< d_k=d$, and for each $i=1,...,k$, $\V_{d_i}$ is free over the algebra $\F[x]/(p(x)^{d_i})$. For each $i$, the summand $\V_{d_i}$ corresponds to the elementary divisor $p(x)^{d_i}$ of $T$.
\end{lemma}

\begin{remark}\label{r0}
In the above lemma, suppose $\deg p(x)=m$. Then

\noindent  $\dim_{\F} \F[x]/(p(x)^{d_i})=md_i$. Suppose  $\V_{d_i}$ has dimension $l_i$ as a free module over $\F[x]/(p(x)^{d_i})$. Thus $\dim_{\F} \V_{d_i}=md_i l_i$. This gives us a secondary \hbox{partition} $\pi: \frac{n}{m}=\sum_{i=1}^k d_i l_i$. 
\end{remark}
We end this section with the following definition. 
\begin{definition}
Let $R$ be a commutative ring with involution $e \mapsto \bar e$. Let $\epsilon=1$ or $-1$. An $\epsilon$-Hermitian form on an $R$-module $M$ is a sesquilinear mapping $s: M \times M \to R$ such that for all $x, y \in M$, 
$$s(x, y)=\epsilon   \overline{s(y, x)}.$$
That is for $\epsilon=1$, $s$ is Hermitian; for $\epsilon=-1$, $s$ is skew-Hermitian. 
\end{definition}

\section{The induced form and the conjugacy classes}
\subsection{The minimal polynomial is prime-power}
\begin{lemma}\label{lss} {\tt(Springer-Steinberg \cite{ss})}
  Let $T: \V \to \V$  in ${\I}(\V, B)$ be  such that $m_T(x)=p(x)^d$, where $p(x)$ is an irreducible  polynomial over $\F$.  Assume that $p(x)$ is either self-dual, or, $x-1$. If  $p(x)=x-1$, then assume $d>1$.  Consider the cyclic $\F$-algebra $\E_d^T=\F[x]/(p(x)^d)$. We simply denote it by $\E^T$ when there is no confusion about $d$. The $\E^T$-module $\V$ is denoted by $\V^T$.
Then we have the following. 

(i) There is a unique automorphism $e \to \bar e$ of $\E^T$ over $\F$ which carries $t$ to $t^{-1}$. 

(ii) There exists an $\F$-linear function $h^T:\E^T \to \F$ such that the symmetric bilinear map 
$\bar h^T: (a, b) \mapsto h^T(ab)$ on $\E^T \times \E^T$ is non-degenerate. Also there exists $c \in \E^T$ such that for all $e \in \E^T$, $h^T(\bar e)=h^T(ce)$. Moreover,  if $p(x) \neq x-1$, we can take $c=1$. If $p(x)=x-1$, then $c=(-1)^{d-1}$.  
\end{lemma}

For a proof of the above lemma cf. Springer-Steinberg \cite[p.254]{ss}. For a proof when the field extension $\F_d=\F[x]/(p(x))$ is separable, cf. Asai \cite[p.329]{asai}. Applying the above lemma we have the following theorem. The theorem is implicit in the work of Springer-Steinberg \cite{ss}. Milnor \cite{milnor} gave a version of the following theorem when $T$ is semisimple. We have given a detailed proof in the general case. The proof is essentially imitating Milnor's proof in the semisimple case.

\begin{lemma}\label{tm}
 The module $\V$ over $\E^T$ admits a unique $\epsilon$-Hermitian form  $H^T(u,v)=\epsilon \overline{H^T(v,u)}$, $\E^T$-linear in the first variable, and is related to the original $\F$-valued inner product by the identity
\begin{equation} \label{hf} B(u, v)=h^T(H^T(u, v)).
\end{equation}
\end{lemma}
\begin{proof}
For $u$, $v$ in $\V$, consider the linear map $L:\E^T \to \F$ given by
$L(e)=B(eu, v)$. There exists a unique $e'$ in $\E^T$ such that
$h^T(ee')=L(e)$.  We define $H^T(u,v)$ to be this element $e'$. That is, $H^T(u, v)$ is defined as follows: 
$$\hbox{for all } e \hbox{ in } \E^T, \hbox{ and for } u, v \hbox{ in } \V, \;\;h^T(eH^T(u, v))=B(eu, v).$$ 
In particular taking $e=1$ we have
\begin{equation*}
 h^T(H^T(u, v))=B(u, v)
\end{equation*}
Now we see that for $u_1$, $u_2$, $v$ in $\V$, 
\begin{eqnarray*}
 h^T(e(H^T(u_1,v)+H^T(u_2,v))) & =& h^T(eH^T(u_1,v))+h^T(eH^T(u_2,v)) \\
&=& B(eu_1,v)+B(eu_2,v)\\
 &=& B(eu_1+eu_2, v) \\
&=&B(e(u_1+u_2), v)=h^T(H^T(u_1+u_2, v))
\end{eqnarray*}
\begin{equation}\label{ed1}
 \;\;\Rightarrow\; H^T(u_1,v)+H^T(u_2,v)=H^T(u_1+u_2,v) \\
\end{equation}
Now for all $e'$ in $\E^T$ we have
\begin{eqnarray*}
 h^T(e'eH^T(u,v)) &=&B(e'eu, v)\\
&=&B(e'(eu), v)=h^T(e'H^T(eu, v))
\end{eqnarray*}
\begin{equation}\label{ed2}
 \Rightarrow eH^T(u,v)=H^T(eu,v)
\end{equation}

This shows that $h^T$ is $\E^T$-linear in the first variable. 

Given any Hermitian form $H(u,v)$ satisfying \eqnref{hf} we see that 
$$h^T(eH(u,v))=h^T(H(eu,v))=B(eu, v).$$
Therefore $H^T(u,v)$ is unique.
 
Further, for all $e$ in $\E^T$,
\begin{eqnarray*}
 h^T(e(\overline{H^T(u,v)}))&=& \epsilon h^T(\bar{e}H^T(u,v)), \  \ \hbox{using part (ii) of \lemref{lss}}\\
 &=&\epsilon B(\bar e u, v) \\
 &=&\epsilon B(ev, u)=h^T(e\epsilon H^T(v,u))
\end{eqnarray*}
\begin{equation}
 \Rightarrow \overline{H^T(u,v)}=\epsilon H^T(v,u)
\end{equation}

This proves the theorem. 
\end{proof}

\begin{remark} \label{r1} Let $S:\V \to \V$ and $T: \V \to \V$ be two isometries such that 
$m_{S}(x)=p(x)^d$, $m_{T}(x)=q(x)^d$, where $p(x), \ q(x)$ are irreducible and self-dual, $\deg p(x)=\deg q(x)$ and $\E^S$ and $\E^T$ are $\F$-isomorphic. Let $s$ and $t$ are images of $S$ and $T$ in $\E^S$ and $\E^T$ respectively. Let  \hbox{$f:\E^S \to \E^T$}  be an $\F$-isomorphism such that $f(s)=t$.  Let $h^S:\E^S \to \F$ be the linear map as in \lemref{lss}. Then $h^T=h^S\circ f^{-1}$ is such a linear map on $\E^T$, and  this map induces a Hermitian form $H'$ on $\V^T$. Since such a Hermitian form is unique, hence we must have $H'=H^T$. Thus for $u$, $v$ in $\V^S$,  $h^S(H^S(u, v))=B(u, v)$, and for $u', v'$ in $\V^T$,
$h^T(H^T(u', v'))=h^S\circ f^{-1} (H^T(u', v'))$. \end{remark}

\begin{definition}\label{here}
Suppose $\E$ and $\E'$ are isomorphic modules over $\F$, and let $f: \E \to \E'$ be an isomorphism. Let $H$ be an $\E$-valued Hermitian form on $\V$ and let $H'$ be an $\E'$-valued Hermitian form on $\V'$. Then $(\V,H)$ and $(\V',H')$ are equivalent if there exists an $\F$-isomorphism $T:\V \to \V'$ such that for all $u, v$ in $\V$ and for all $e$ in $\E$ the following conditions are satisfied.

(i) $T(e v)=f(e)T(v)$, and

(ii) $H'(T(u), T(v))=f(H(u, v))$.

When $\E=\E'$, we take $f$ to be the identity in the definition. 
\end{definition}

\begin{theorem}\label{lppp}
Suppose $S$ and $T$ are isometries of $(\V, B)$. Let the minimal polynomial of both $S$ and $T$ be $(x-1)^d$ or,  $p(x)^d$,  where $p(x)$ is monic, self-dual, and, irreducible over $\F$. Let $H^S$ and $H^T$ be the Hermitian form induced by $S$ and $T$ respectively.
\begin{enumerate}
 \item[(i)]Then $S$ and $T$ are conjugate in ${\rm I}(\V, B)$ if and only if $H^S$ and $H^T$ are equivalent. 

\item[(ii)] Let $Z(T)$ be the centralizer of $T$ in ${\rm I}(\V, B)$. Then an isometry $C$ is in $Z(T)$ if and only if $C$ preserves $H^T$, i.e. $Z(T)= U(\V^T, H^T)$.\end{enumerate}
\end{theorem}
\begin{proof}
Suppose $S$ is conjugate to $T$ in ${\rm I}(\V, B)$. Let $C$ in ${\rm I}(\V, B)$ be such that $T=CSC^{-1}$. Then  $C:\V^S \to \V^T$ is an
$\F$-isomorphism. For $l \geq 1$, and $v$ in $\V^S$,
$$
C(s^l v)= C \circ S^l (v)= T^l\circ C(v)=t^lC(v)=f(s^l)C(v).$$
It follows that, for all $e$ in $\E^S$, and $v$ in $\V^S$,   $C(e v)=f(e)C(v)$. 
 For $u, v$ in $\V^S$, note that
\begin{eqnarray*}
h^S(f^{-1}(H^T(C(u), C(v))) &=& h^S \circ f^{-1}(H^T(C(u), C(v)))\\
&=& h^T(H^T(C(u),C(v))\\ 
& =& B(C(u), C(v)))\\
&=& B(u, v)=h^S(H^S(u, v)).
\end{eqnarray*}
Hence, by the uniqueness of $H^S$ we have,   $f^{-1}(H^T(C(u), C(v)))=H^S(u, v)$,  i.e.  $H^T(C(u), C(v))=f(H^S(u, v))$.
This shows that $H^S$ and $H^T$ are equivalent.

Conversely, suppose $H^S$ and $H^T$ are equivalent. Let $C: \V^S \to \V^T$ be an $\F$-isomorphism such that $(i)$ and $(ii)$ in \defref{here} hold. We have for $v$ in $\V$,
\begin{eqnarray*}
CS(v) &=& C(sv)\\
&=& f(s)C(v)\\
&=& tC(v)=T C(v).
\end{eqnarray*}
that is, $CSC^{-1}=T$. Further, for $x$, $y$ in $\V$, 
\begin{eqnarray*}
 B(C(x), C(y))&=&h^T(H^T(C(x), C(y)))\\
&=& h^T(f(H^S(x, y)))\\
& =& h^S(H^S(x, y))=B(x, y).
\end{eqnarray*}
Hence $C:\V \to \V$ is an isometry. This completes the proof of $(i)$. 

$(ii)$ Note that an invertible linear transformation $C:\V \to \V$ is $\E^T$-linear if and only if $CT=TC$.  Now replacing $S$ by $T$, and $f$ by identity in the proof of (i) the theorem follows.
\end{proof}

\subsection{Conjugacy classes}\label{cc}
\begin{theorem}\label{ppp}
Let $T$ be an element of ${\rm I}(\V, B)$. Let the minimal polynomial of $T$ be $p(x)^d$,  where $p(x)=x-1$ or $p(x)$ is monic, self-dual and irreducible over $\F$.
\begin{enumerate}
\item{
The conjugacy class of $T$ in ${\rm I}(\V, B)$ is determined by the following data.
\begin{itemize}
\item[(i)]{ The elementary divisors of $T$. }
\item[(ii)]{ The finite sequence of equivalence classes of Hermitian spaces 
$$\{(\V^T_{d_1}, H^T_{d_1}),\cdots,(\V^T_{d_k}, H^T_{d_k})\},$$
where $1 \leq d_1 < d_2 < \cdots< d_k=d$, and for each $i$, $H_{d_i}^T$ takes values in the cyclic algebra $\E_{d_i}=\F[x]/(p(x)^{d_i})$.}
\end{itemize}
}
\item{ The centralizer of $T$ is the direct product 
$$U(\V^T_{d_1}, H^T_{d_1}) \times \cdots\times U(\V^T_{d_k}, H^T_{d_k}).$$}

\end{enumerate}

\end{theorem}
\begin{proof}
 Suppose $S: \V \to \V$ and $T:\V \to \V$ are two isometries. If $S$ and $T$ are conjugate in ${\rm I}(\V, B)$, then by the structure theory of linear operators and \thmref{lppp},  it is clear that they have the same data.

Conversely, suppose $S$ and $T$ have the same data.
 The elementary divisors of $S$ and $T$ determine orthogonal decompositions of $\V$ as
\begin{equation} \label{e1} \V=\V^S_{d_1} \oplus\cdots\oplus \V^S_{d_k},\end{equation}
\begin{equation} \label{e2} \V=\V^T_{d_1} \oplus\cdots\oplus \V^T_{d_k},\end{equation}
where $1 \leq d_1 < \cdots< d_k=d$, and for each $i$, $\V^S_{d_i}$, resp. $\V^T_{d_i}$ is free when considered as a module over $\E^S_{d_i}$, resp. $\E^T_{d_i}$. Since $\E^S_{d_i}$ and $\E^T_{d_i}$ are isomorphic, wthout loss of generality, we identify them with $\E_{d_i}=
 \F[x]/(p(x)^{d_i})$. 
Since $S$ and $T$ have the same set of elementary divisors, $\V^S_{d_i}$ is isomorphic to $\V^T_{d_i}$ as a free module over $\E_{d_i}$, for $i=1,2,...,k$.
 For each $i=1,2,\cdots,k$, since $(\V^S_{d_i}, H^S_{d_i})$ is equivalent to $(\V^T_{d_i}, H^T_{d_i})$, by \thmref{lppp}, $S|_{\V^S_{d_i}}$ is conjugate to $T|_{\V^T_{d_i}}$. Hence $S$ is conjugate to $T$.   

The description of $Z(T)$ is clear from the orthogonal decomposition of $\V$  and  part (2) of \thmref{lppp}.
\end{proof}

\subsection{The $z$-classes}$\;$

\begin{theorem}\label{zc1}
Let $T: \V \to \V$ be an element in ${\rm I}(\V, B)$ such that  $m_T(x)=p(x)^d$, where $p(x)$ is self-dual and irreducible over $\F$. The $z$-class of $T$ is determined by the following data. 
\begin{itemize}
\item[(i)]{ The degree  $m$ of $p(x)$. }
\item[(ii)]{ A non-decreasing sequence of integers $(d_1,\ldots,d_k)$ which corresponds to the secondary partition  $\pi: \frac{n}{m}=\Sigma_{i=1}^k d_il_i$.  
}
\item[(iii)]{ A sequence $(\E_{d_1},\ldots ,\E_{d_k})$ of isomorphism classes of cyclic algebras over $\F$, where for each $i=1,2,\ldots,k$, $\E_{d_i}$ is isomorphic to $\F[x]/(p(x)^{d_i})$. }
\item[(iv)]{ A finite sequence of equivalence classes of Hermitian forms
\\ \hbox{$(H_{d_1},\ldots ,H_{d_k})$}, where each $H_{d_i}$ takes values in $\E_{d_i}$.}\end{itemize}
\end{theorem}
\begin{proof}
Let $S$ and $T$ be two isometries of $(\V, B)$ with same data $(i)-(iv)$.  We use the same notations as in the previous theorem. Let $m_S(x)=p(x)^d$, and $m_T(x)=q(x)^d$, degree of $p(x)=$ degree of $q(x)=m$. For each $i=1, \ldots, k$,  $\E^S_{d_i}$ and $\E^T_{d_i}$ are isomorphic. Let $f_i: \E_{d_i}^S \to \E_{d_i}^T$ be one such isomorphism. 

  For simplicity, for each $i$, we identify $\E_{d_i}^S$, and $\E^T_{d_i}$ with $\E_{d_i}$. Moreover,  following \remref{r1} assume $h^S=h^T$.  

  Since the Hermitian forms $H_{d_i}^S$ and $H_{d_i}^T$ are equivalent, let  $F_i: (\V_{d_i}^S, H_{d_i}^S) \to (\V_{d_i}^T, H_{d_i}^T)$ be an equivalence of the Hermitian spaces. We see that, for $u, v \in \V_{d_i}^S$, 
\begin{eqnarray*}
B(F_i u, F_i v)&=&h^T (H_{d_i}^T(F_i u, F_i v))\\&=&h^T \circ f_i (H_{d_i}^S (u, v))\\&=&h^S(H_{d_i}^S(u, v)), \hbox{ see, \remref{r1}}, \\&=& B(u, v).\end{eqnarray*}
Thus  $F_i$ is an isometry with respect to $B$. Further, $F_i$ conjugates 
$Z(S|_{\V_{d_i}^S})=U(\V_{d_i}^S, H_{d_i}^S)$ and  
$Z(T|_{\V_{d_i}^T})=U(\V_{d_i}^T, H_{d_i}^T)$.  Thus, $F=F_1 \oplus F_2 \oplus \cdots \oplus F_k$ is an isometry of $(\V,  B)$ and $F$  conjugates $Z(S)$ and $Z(T)$. 

Conversely,  suppose $S$ and $T$ are in the same $z$-class. Replacing $S$ by its conjugate, we may assume,  $Z(S)=Z(T)$. Hence by \hbox{part (2) of \thmref{cc} } we see that $S$ and $T$ have isomorphic decompositions \eqnref{e1} and \eqnref{e2}. After renaming the indices, if necessary, we may assume further that for $i=1,2,...,k$,  $(\V^S_{d_i}, H^S_{d_i})$ and $(\V^T_{d_i}, H^T_{d_i})$ are equivalent. In particular, $\E^S_{d_i}$ and $\E^T_{d_i}$ are isomorphic,  and their common dimension over $\F$ is $md_i$. This implies $\deg m_S(x)=\deg m_T(x)$.   Consequently we attach the partition (see, \remref{r0}) $\pi: \frac{n}{m}=\Sigma_{i=1}^k d_i l_i$ to the $z$-class  and it follows that $S$ and $T$ have the same data $(i)-(iv)$. 

This completes the proof. 
\end{proof}

\subsection{The minimal polynomial is $(x+1)^d$}$\;$

Note that,  two isometries $S$ and $T$ are conjugate if and only if $-S$ and $-T$ are conjugate. Now,  suppose $T$ is an isometry with minimal polynomial $(x-1)^d$. Then $-T: \V \to \V$ is also an isometry, and $m_{-T}(x)=(x+1)^d$. Conversely, if $T$ is unipotent, then $-T$ has minimal polynomial $(x+1)^d$. Thus this case is reduced to the unipotent case, and the parametrization of the conjugacy and the $z$-classes of $T$ are similar to that of $-T$.  

\subsection{The minimal polynomial is a product of pairwise dual polynomials} \; \; 

\medskip 
 Suppose $T:\V \to \V$ is an element in ${\rm I}(\V, B)$ such that $m_T(x)= q(x)^d q^{\ast}(x)^d$, where $q(x)$, $q^{\ast}(x)$ are irreducible polynomials over $\F$ of degree $m$ and are dual to each-other. For our purpose, it is enough to consider the case when $T$ is semisimple. So assume, $d=1$. Let $\V_q=\hbox{ ker } q(T)$, $\V_{q^{\ast}}=\hbox{ ker } q^{\ast}(T)$. We have
\begin{equation}\label{vd1}\V=\V_q + \V_{q^{\ast}},\end{equation}
and $B|_{\V_{q}}=0=B|_{\V_{q^{\ast}}}$, dim $\V_q=\hbox{ dim } \V_{q^{\ast}}$. Since $B$ is non-degenerate, we can choose a basis $\{e_1,....,e_m, f_1,...,f_m\}$ of $\V$ such that for each $i$, $e_i \in \V_q$, $f_i \in \V_{q_{\ast}}$, and for all $i, j=1, \ldots m$, 
$$B(e_i, e_i)=0=B(f_i, f_i), \; B(e_i, f_j)=\delta_{ij} \hbox{ or }-\delta_{ij}.$$
For each $w^{\ast} \in \V_{q^{\ast}}$, define the linear map
$w^{\ast}: v \to B(v, w)$. These maps enable us to identify $\V_{q^{\ast}}$ with the dual of $\V_q$. Thus $(\V,B)$ is a \emph{standard space}, see, \cite[Section-2.1]{gk2}, and $T=T_L + T_L^{\ast}$, where $T_L$, the restriction of $T$ to $\V_q$, is an element of ${\rm GL}(\V_q)$. Conversely, given an element in ${\rm GL}(\V_q)$,  it can be extended to an isometry of $(\V, B)$. Hence the conjugacy classes in $\I(\V, B)$ are parametrized by the usual structure theory of linear maps. 

\medskip Define a form $H_T$ on $\V$ as  follows: For $u, v \in \V$, $H_T(u, v)=B(Tu, v)$. Clearly, if $S$ in $I(\V, B)$ commutes with $T$, then
$$H_T(Su, Sv)=B(TSu, Sv)=B(STu, Sv)=B(Tu, v)=H_T(u, v).$$
Conversely, if $S$ preserves $H_T$, then $H_T(Su, Sv)=H_T(u, v)$ implies that for $u, v \in \V$, 
$B(STu, Sv)=B(TSu, Sv)$. By the non-degeneracy of $B$, it follows that $S$ commutes with $T$.

Now, suppose $\E$  is the splitting field of $q(x)$ (hence of $q^{\ast}(x)$ also).  Let $\alpha_1, \cdots, \alpha_k$ be distinct roots of $q(x)$  in $\E$. There is a unique automorphism $e \mapsto \bar e$ which maps $\alpha_i \to \alpha_i^{-1}$. Further $\V$ over $\E$ has a decomposition into eigenspaces: 
$$\V=\bigoplus_{i=1}^k \  ( \V_{\alpha_i} + \V_{\alpha_i^{-1}}). $$
Without loss of generality, assume $\V=\V_{\alpha} + \V_{\alpha^{-1}}$. 
Then $H_T$ defines an $\E$-valued Hermitian form on $\V$: when $u \in \V_{\alpha}$ and $v \in \V_{\alpha^{-1}}$, we have $H_T(u, v)=\overline{ H_T(v, u)}$. Thus,  $Z(T)={\rm U}(\V, H_T)$.  
We have now proved the following lemma. 
\begin{lemma}\label{zc2}
Let $\dim \V$ be even. Let  $T$ be a semisimple element in ${\rm I}(\V, B)$ such that $m_T(x)=q(x)q^{\ast}(x)$, where $q(x)$, $q^{\ast}(x)$ are irreducible polynomials over $\F$ and they are dual to each-other. Let $\E$ be the splitting field of the minimal polynomial of $T$. Then the $z$-class of $T$ is determined by 
\begin{itemize}
\item[(i)]{The degree of $q(x)$. }
\item[(ii)]{Equivalence class of $\E$-valued Hermitian forms $H_T$ on $\V$. }
\end{itemize}
\end{lemma}

\section{Classification over a perfect field}\label{perfect}
Let $\F$ be a perfect field of characteristic different from two. The group of isometries ${\rm I}(\V, B)$ consists of  $\F$-points of a linear algebraic group defined over $ \F$. Thus each $T$ in ${\rm I}(\V, B)$ has the Jordan decomposition $T=T_s T_u$, where $T_s$ is semisimple (that is, every $T_s$-invariant subspace has a $T_s$-invariant complement) and $T_u$ is unipotent. Moreover $T_s$, $T_u$ are also elements of ${\rm I}(\V, B)$, they commute with each other, and they are polynomials in $T$ (see, \cite[Chapter 15]{hum}). Moreover we have $Z(T)=Z(T_s)\cap Z(T_u)$. To some extent, the Jordan decomposition reduces the study of conjugacy and $z$-classes in ${\rm I}(\V, B)$ to the study of conjugacy and $z$-classes of semisimple and unipotent elements.
 Suppose $T: \V \to \V$ is a semisimple isometry with prime and self-dual minimal polynomial.  Suppose $\E=\F[x]/(p(x))$.  Then $\E$ is a finite simple field extension of $\F$, $[\E: \F]=$ degree of $p(x)$. Thus the underlying cyclic algebras in \thmref{zc1} (or, \thmref{zc2}) are isomorphic to the field $\E$, and the Hermitian forms $H_{d_i}$ are $\E$-valued.

Now suppose $T$ is an arbitrary semisimple isometry, and let its minimal polynomial be a product of pairwise distinct prime polynomials over $\F$. Let 
 $$m_T(x)=(x-1)^e(x+1)^{f} ~ \Pi_{i=1}^k p_i(x)\;\Pi_{j=1}^l q_j(x) q_j^{\ast}(x),$$
  where $e, f=0$ or $1$, $p_1(x),...,p_k(x)$ are self-dual, and for $j=1,2,...,l$, $q_j(x)$ is dual to $q_j^{\ast}(x)$. Suppose for each $i$, the degree of $p_i(x)$ is $2m_i$, and for each $j$, degree of $q_j(x)$ is $l_j$. Let the characteristic polynomial of $T$ be 
  $$\chi_T(x)=(x-1)^l(x+1)^m ~ \Pi_{i=1}^k p_i(x)^{d_i}\;\Pi_{j=1}^l q_j(x)^{e_j}q^{\ast}_j(x)^{e_j}.$$
The primary decomposition of $\V$ with respect to $T$ is determined by the minimal and the characteristic polynomial of $T$.  We get the following orthogonal decomposition of $\V$ into $T$-invariant subspaces:
 \begin{equation}\label{pe1}\V=\V_1 \oplus \V_{-1}\oplus_{i=1}^{k} \V_i \bigoplus \oplus_{j=1}^{l} (\W_j + \W^{\ast}_j), \end{equation}
where $\V_1=\hbox{ker }(T-I)^l$, $\V_{-1}=\hbox{ker }(T+I)^m$, for each $i=1,...,k$, \hbox{$\V_i=\hbox{ker }p_i(T)$}, and for each $j=1,2,...,l$, $\W_j=\hbox{ker }q_j(T)$, $\W_j^{\ast}=\hbox{ker }q_j^{\ast}(T)$. We have, dim $\V_i=2m_id_i$, and dim $\W_j=l_j e_j$. Let $\E_i$ be the field isomorphic to $\F[x]/(p_i(x))$, and let $\K_j$ be the field isomorphic to $\F[x]/(q_j(x))$. As a vector space over $\E_i$, $\V_i$ is the direct sum of $d_i$ copies of $\E_i$.  The $z$-class of $T$ is determined by the $z$-classes of the restrictions of $T$  to each component in the primary decomposition \eqnref{pe1}. Note that $T|_{\V_1}=I$, $T|_{\V_{-1}}=-I$. Since $I$ and $-I$ belong to the center of the group,  the $z$-class of $T$ restricted to $\V_1$ or $\V_{-1}$ is determined by $\dim \V_1$ or $\dim \V_{-1}$. Now, the following theorem follows from \thmref{zc1} and \lemref{zc2}.

\begin{theorem}\label{zcs}
Suppose $\F$ is perfect. Let $T: \V \to \V$ be a semisimple element in ${\rm I}(\V, B)$.  The $z$-class of $T$ is determined by
\begin{itemize}
\item[(i)]{A finite sequence of integers $(l, m; m_1,...,m_{k_1}; l_1,...,l_{k_2})$. }
\item[(ii)]{  A partition of $n$, $\pi: n=l+m+2 \Sigma_{i=1}^{k_1} m_i d_i + 2 \Sigma_{j=1}^{k_2} l_j e_j$.}
\item[(iii)]{ Field extensions $\E_i$, $1 \leq i \leq k_1$ of $\F$, $[\E_i: \F]=2m_i$, and $\K_j$, $1 \leq j \leq k_2$, $[\K_j: \F]= l_j$. }
\item[(iv)]{ Equivalence classes of   $\E_i$-valued Hermitian forms $H_i$, $1 \leq i \leq k_1$, and $\K_j$-valued Hermitian forms  $H'_j$, $1 \leq j \leq k_2$.  }\end{itemize}
\end{theorem}

\section{Finiteness of the $z$-classes: Proof of \thmref{zc}} 
If there are only finitely many $z$-classes of semisimple and unipotent elements, it follows from the Jordan decomposition that there are only finitely many $z$-classes. So it is enough to show the finiteness of $z$-classes of semisimple and unipotent elements respectively. 

Suppose $\F$ is a perfect field that has only finitely many field extensions of degree at most $n$. Then the number of distinct equivalence classes of quadratic forms of rank at most $n$ is finite. Hence the number of equivalence classes of Hermitian forms of rank at most $n$ over an extension field of $\F$ is finite.  Combining this fact with \thmref{zcs}, and the fact that there are only finitely many partitions of $n$, we  have that there are only finitely many $z$-classes of semisimple elements. 

Similarly,   it follows from \thmref{ppp} that there are only finitely many conjugacy classes of unipotent elements;  hence there are only finitely many $z$-classes of unipotent elements. 

This completes the proof of \thmref{zc}.


\begin{thebibliography}{99}


\bibitem{asai} T. Asai, {\it The conjugacy classes in the unitary, symplectic and orthogonal groups over an algebraic number field}, J. Math. Kyoto. Univ, 16-2 (1976), 325 - 350.

\bibitem{bds} A. Borel and J. de Siebenthal, {\it Les sous-groupes de rang maximum des groupes de Lie clos} (French), Comment. Math. Helv. 23 (1949), 200 - 221.

\bibitem{bc} N. Burgoyne and R. Cushman, {\it Conjugacy classes in linear groups},  J. Algebra 44 (1977), 339 - 362.

\bibitem{cg} W. S. Cao and K. Gongopadhyay,   {\it Algebraic characterization of  isometries of the complex and the quaternionic hyperbolic planes},  Geom. Dedicata, Volume 157, Number 1 (2012), 23 - 39. 

\bibitem{cg2} W. S. Cao and K. Gongopadhyay, {\it Commuting isometries of the complex hyperbolic space},   Proc. Amer. Math. Soc. 139 (2011), 3317 - 3326. 


\bibitem{go1} K. Gongopadhyay, {\it Algebraic characterization of the isometries of the hyperbolic $5$-space}, Geom. Dedicata, Volume 144, Number 1 (2010), Page 157 - 170.

\bibitem{go2} K. Gongopadhyay, {\it Algebraic characterization of  isometries of the hyperbolic  $4$-space},  \hbox{ISRN Geometry} (2012), Article ID 757489, doi: 10.5402/2012/757489. 

\bibitem{gjg} K. Gongopadhyay, {\it The z-classes of quaternionic hyperbolic isometries}, to appear in J. Group Theory. 

\bibitem{gk1} K. Gongopadhyay and R. S. Kulkarni,  {\it $z$-Classes  of isometries of the hyperbolic space}, Conform. Geom. Dyn. 13 (2009), 91 - 109.

\bibitem{gk2} K. Gongopadhyay and R. S.  Kulkarni,  {\it On the existance of invariant non-degenerate bilinear form under a linear map}. Linear Algebra Appl.  434 (2011), Issue 1, 89 - 103.

\bibitem{rony} R. Gouraige, {\it $z$-classes of elements in central simple algebras}, Thesis, City University of New York (2006).


\bibitem{hum} J. E. Humphreys, {\it Linear Algebraic Groups,} Graduate Texts in Mathematics, No. 21. Springer-Verlag, New York-Heidelberg, 1975.

\bibitem{k} C. Kiehm, {\it Equivalence of bilinear forms}, J. Algebra 31 (1974), 45 - 66.

 
\bibitem{kulkarni} R. S. Kulkarni,  {\it Dynamics of linear and affine maps},  Asian J. Math. 12 (2008), no. 3, 321 - 344.

\bibitem{rkrjm} R. S. Kulkarni, {\it Dynamical types and conjugacy classes of centralizers in groups},  J. Ramanujan Math. Soc.  22  (2007),  no. 1, 35 - 56.

\bibitem{milnor} J. Milnor,  {\it On isometries of inner product spaces}.  Invent. Math.  8  (1969),  83 - 97.

\bibitem{as} A. Singh, {\it Conjugacy Classes of Centralizers in $G_2$},  J. Ramanujan Math. Soc. 23 (2008), no. 4, 327 - 336. 

\bibitem{ss} T. A. Springer and R. Steinberg, {\it Conjugacy classes}, Seminar on Algebraic Groups and Related Finite Groups (Princeton 1968/69),  167 - 266, Lecture Notes in Mathematics 131, Springer-Verlag (1970). 

\bibitem{stein} R. Steinberg, {\it Conjugacy Classes in Algebraic Groups}, notes by V. Deodhar, Lecture Notes in Mathematics 366, Springer-Verlag (1974). 

\bibitem{wall} G. E. Wall, \emph{On the conjugacy classes in the unitary, symplectic and orthogonal groups}, 
J.  Austral. Math. Soc. 3 (1963), 1 - 63.

\bibitem{weyl}H. Weyl, {\it Theorie der Darstellung kontinuierlicher halb-einfacher Gruppen durch lineare Transformationen} I, II, III and Nachtrag (German), I: Math Z. 23 (1925),  no. 1, 271 - 309, II:  Math. Z.  24  (1926),  no. 1, 328 - 376,  III:  Math. Z.  24  (1926),  no. 1, 377 - 395, Nachtrag:  Math. Z.  24  (1926),  no. 1, 789 - 791.

\bibitem{will}J. Williamson, {\it Normal matrices over an arbitrary field of characterrstrc zero}, Amer.  J. Math 61 (1939), 335 - 356. 




\end{thebibliography}
\end{document}